\RequirePackage{iftex}
\documentclass[11pt,a4paper]{amsart}
\ifptex
	\RequirePackage{plautopatch}
	\usepackage[dvipdfmx]{graphicx}
\fi
\usepackage[alphabetic]{amsrefs}
\usepackage{tikz,tikz-cd}
\usepackage{amsmath,amssymb,amsthm}
\usepackage{mathtools}
\usepackage{mathrsfs}
\usepackage{hyperref}
\hypersetup{
	pdftitle={A NOTE ON WEAKLY PROREGULAR SEQUENCES AND LOCAL COHOMOLOGY},
	pdfauthor={Ryoya Ando}
}
\usepackage{myarticlepreamble}
\theoremstyleenglish

\title
{A NOTE ON WEAKLY PROREGULAR SEQUENCES AND LOCAL COHOMOLOGY}

\author{Ryoya Ando}
\date{\today}
\address{\tusaddressfull}
\email{\gmail}
\email{\tusmail}
\subjclass[2020]{13D03 (Primary) 13D45 (Secondary)}
%# AG / CA
%	13D45  	Local cohomology and commutative rings [See also 14B15]
%	13D03  	(Co)homology of commutative rings and algebras (e.g., Hochschild, Andre-Quillen, cyclic, dihedral, etc.)

\begin{document}

\begin{abstract}
	In this note, we give an elementary proof of the result given by Schenzel that there are functorial isomorphisms between local cohomology groups and \v{C}ech cohomology groups, by using weakly proregular sequences.
	In \cite{Schenzel}, he used notions of derived category theory in his proof, but we do not use them in this paper.
\end{abstract}

\maketitle

\section{Introduction} 
In this note, we assume all rings are commutative with identity element. Let $A$ denote a ring and $I$ an ideal of $A$. The functor $\Gamma_I$ is defined by
\[\Gamma_I(M)=\mkset{x\in M}{\text{$I^nx=0$ for some $n \geq 0$}}\]
for an $A$-module $M$. Here $H^i_I(-)$ denote the local cohomology functors, defined as the right derived functors of $\Gamma_I(-)$. In other words, let $J^\bullet$ be an injective resolution of the $A$-module $M$, then $H^i_I(M)\cong H^i(\Gamma_I(J^\bullet))$. In Noetherian cases, the local cohomology can be written by using \v{C}ech cohomology. Let $\underline{a}=a_1,\dots, a_r$ be a sequence of elements of $A$, and $I=(a_1,\dots,a_r)$. $\check{H}^i(\underline{a},M)$ denote the \v{C}ech cohomology (see \ref{defi:CechCohomology}). It is well-known that there are isomorphisms;
\[H^i_I(M)\cong \check{H}^i(\underline{a},M)\tag{$\ast$}\]
for any $A$-module $M$ if $A$ is a Noetherian ring and $I=(a_1,\dots,a_r)$, see for example \cite[Theorem 3.5.6.]{BrunsHerzog}.

This result was generalised by \cite{Schenzel}. For arbitrary ring $A$, he showed that formula $(\ast)$ is true for any $A$-module $M$ if and only if $\underline{a}$ is a weakly proregular sequence. Let $H_i(\underline{a})$ be the Koszul homology of the sequence $\underline{a}$. A system of elements $\underline{a}=a_1,\dots,a_r$ is called a weakly proregular sequence if for any $1\leq i\leq d$ and for each $n>0$ there is an $m\geq n$ such that the natural map;
\[H_i(\underline{a}^m)\to H_i(\underline{a}^n)\]
is zero map, note that $\underline{a}^n$ is the sequence defined by $a_1^n,\dots,a_r^n$.

The goal of this note is the explanation of the following result without using notions of derived category theory.
\begin{thm}[\ref{thm:goal}]
	Let $A$ be a ring, $\underline{a}=a_1,\dots,a_r$ a system of elements of $A$ and $I=(a_1,\dots,a_r)$. $\underline{a}$ is a weakly proregular sequence if and only if for any $i$ and $A$-module $M$, $H^i_I(M)\cong\check{H}^i(\underline{a},M)$.
\end{thm}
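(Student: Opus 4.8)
The plan is to prove both implications through the standard device of the \emph{\v{C}ech complex as a colimit of Koszul complexes}. For a single element $a$, write $K^\bullet(a^n) = (A \xrightarrow{a^n} A)$ placed in degrees $0,1$, and let the transition maps $K^\bullet(a^{n+1}) \to K^\bullet(a^n)$ be multiplication by $a$ in degree $0$ and the identity in degree $1$; the colimit over $n$ is the two-term complex $A \to A_a$. For the sequence $\underline{a}$ one takes tensor products, so that $\check{C}^\bullet(\underline{a}) := \varinjlim_n K^\bullet(\underline{a}^n)$ is exactly the \v{C}ech complex whose cohomology (tensored with $M$) defines $\check{H}^i(\underline{a},M)$. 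Since filtered colimits are exact and commute with homology, for \emph{any} $A$-module $M$ one has $\check{H}^i(\underline{a},M) \cong \varinjlim_n H^i(K^\bullet(\underline{a}^n) \otimes_A M) \cong \varinjlim_n H^{r-i}(\mathrm{Hom}(K_\bullet(\underline{a}^n), M))$, where the last identification uses the self-duality of the Koszul complex (with the appropriate reindexing by $r$). This reduces the whole theorem to comparing $H^i_I(M)$ with the colimit of Koszul cohomologies.

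Next I would recall the comparison map. There is a canonical map of $\delta$-functors $H^i_I(-) \to \check{H}^i(\underline{a},-)$: it arises because $\Gamma_I(M) = \varinjlim_n \mathrm{Hom}(A/I^n, M)$ receives a map from the degree-$0$ cohomology of each $K^\bullet(\underline{a}^m)\otimes M$ compatibly, and then one extends to higher $i$ using that $\check{H}^\bullet$ is a universal $\delta$-functor on the category of all $A$-modules (the \v{C}ech complex $\check{C}^\bullet(\underline{a})\otimes_A(-)$ is a complex of flat modules computing an erasable, hence universal, cohomological functor; alternatively one writes it directly from the presentation by injectives). Since $H^\bullet_I$ is also a $\delta$-functor and both agree in degree zero when $I$ is, say, finitely generated — indeed $\check{H}^0(\underline{a},M) = \Gamma_I(M)$ always holds because an element killed by each $a_i^{m}$ is killed by a power of $I$ — the comparison map is an isomorphism in degree $0$ and the question is whether it is an isomorphism in all degrees for all $M$.

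For the \textbf{(weakly proregular $\Rightarrow$ isomorphism)} direction, the key point is that when $\underline{a}$ is weakly proregular, the pro-system $\{H_i(\underline{a}^n)\}_n$ is pro-zero for $i \geq 1$, hence so is $\{\mathrm{Ext}\text{-type terms}\}$; more precisely I would first treat the case $M = J$ an injective module and show $\check{H}^i(\underline{a},J) = 0$ for $i > 0$. This follows because $H^i(K^\bullet(\underline{a}^n)\otimes J) \cong \mathrm{Hom}_A(H_i(\underline{a}^n), J)$ — using that $J$ is injective so $\mathrm{Hom}(-,J)$ is exact and converts the Koszul homology into Koszul cohomology after reindexing — and then $\check{H}^i(\underline{a},J) = \varinjlim_n \mathrm{Hom}_A(H_i(\underline{a}^n),J)$; the weak proregularity makes each transition map in this colimit zero for $i\geq 1$, forcing the colimit to vanish. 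Then, given an arbitrary $M$, take an injective resolution $J^\bullet$; the double complex $\check{C}^\bullet(\underline{a}) \otimes_A J^\bullet$ has one spectral sequence collapsing (by the vanishing just proved, only the row $\check{H}^0(\underline{a},J^\bullet) = \Gamma_I(J^\bullet)$ survives) to give $H^i_I(M)$, and the other spectral sequence computes $\check{H}^i(\underline{a}, M)$ because $\check{C}^\bullet(\underline{a})$ is a bounded complex of flat modules so tensoring with the resolution $M \to J^\bullet$ is a quasi-isomorphism in each \v{C}ech degree; comparing the two edge maps identifies them with the canonical comparison map, proving it is an isomorphism.

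For the converse, \textbf{(isomorphism $\Rightarrow$ weakly proregular)}, I would run the argument backwards by testing on a well-chosen injective. Assuming $H^i_I(M) \cong \check{H}^i(\underline{a},M)$ for all $M$ and all $i$, apply it with $M = E$ the injective hull of $A/\mathfrak{p}$ — or more simply note $\check{H}^i(\underline{a}, J) = H^i_I(J) = 0$ for any injective $J$ and $i > 0$ (since $\Gamma_I$ of an injective resolution of an injective is the injective itself concentrated in degree $0$). Hence $\varinjlim_n \mathrm{Hom}_A(H_i(\underline{a}^n), J) = 0$ for every injective $J$ and every $i \geq 1$. Now I would argue that this colimit-of-Homs vanishing for \emph{all} injectives forces the pro-system $\{H_i(\underline{a}^n)\}$ to be pro-zero: take $J$ to be an injective cogenerator (a product of injective hulls of all $A/\mathfrak{m}$, or $\mathrm{Hom}_{\mathbb{Z}}(A,\mathbb{Q}/\mathbb{Z})$), so that $\mathrm{Hom}_A(-, J)$ is faithful and exact; then $\varinjlim_n \mathrm{Hom}_A(H_i(\underline{a}^n),J) = 0$ means, by the way colimits of Homs detect pro-triviality of finitely generated (indeed finitely presented) modules — each $H_i(\underline{a}^n)$ is finitely presented since the Koszul complex is finite free — that for every $n$ there is $m \geq n$ with the transition map $H_i(\underline{a}^m) \to H_i(\underline{a}^n)$ becoming zero after applying $\mathrm{Hom}(-,J)$, and faithfulness of $\mathrm{Hom}(-,J)$ upgrades this to the transition map itself being zero. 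That is precisely weak proregularity.

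The main obstacle I anticipate is the bookkeeping in the two reductions to injectives: making the identification $H^i(K^\bullet(\underline{a}^n)\otimes_A J) \cong \mathrm{Hom}_A(H_i(\underline{a}^n), J)$ fully rigorous (one needs the self-duality isomorphism $K^\bullet(\underline{a}^n) \cong \mathrm{Hom}_A(K_\bullet(\underline{a}^n), A)[-r]$ to be compatible with the transition maps, up to a sign/unit that does not affect the colimit), and ensuring that the comparison map coming out of the double-complex spectral sequence really is the a priori canonical comparison map of $\delta$-functors rather than merely \emph{some} isomorphism. Both are handled by choosing compatible bases for the Koszul complexes once and for all and by checking the degree-zero compatibility, after which universality of the \v{C}ech $\delta$-functor pins everything down. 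The converse direction's only subtlety — translating ``colimit of Homs into an injective cogenerator vanishes'' into ``pro-zero'' — is standard but worth stating carefully, since it uses finite presentation of Koszul homology in an essential way.
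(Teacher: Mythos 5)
Your forward direction is essentially sound, and its key ingredient is exactly the paper's: for an injective module $J$ one has $H^i(K^\bullet(\underline{a}^n)\otimes J)\cong\mathrm{Hom}_A(H_i(\underline{a}^n),J)$, and weak proregularity kills the transition maps, so $\check{H}^i(\underline{a},J)=\varinjlim_n\mathrm{Hom}_A(H_i(\underline{a}^n),J)=0$ for $i>0$. Where you diverge is in how this vanishing is converted into the isomorphism for all $M$: you run a double-complex/spectral-sequence argument on $\check{C}^\bullet(\underline{a})\otimes J^\bullet$, while the paper stays inside Grothendieck's $\delta$-functor formalism (vanishing on injectives $=$ effaceability $\Rightarrow$ universality, and $\check{H}^0(\underline{a},-)\cong\Gamma_I$ identifies the two universal $\delta$-functors); both work, but the paper's route matches its stated aim of avoiding heavier machinery and spares you the edge-map bookkeeping you flag as an obstacle. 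One slip in your setup: the canonical comparison map should be produced from universality of the derived functor $H^\bullet_I$ (which holds unconditionally), not from the \v{C}ech functor being ``erasable, hence universal'' --- effaceability of $\check{H}^i(\underline{a},-)$ for $i>0$ is precisely \emph{equivalent} to weak proregularity, so asserting it before that hypothesis is in force is circular.

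The converse as you propose it has a genuine gap. From $\varinjlim_n\mathrm{Hom}_A(H_i(\underline{a}^n),J)=0$ for an injective cogenerator $J$ you only get, for each \emph{individual} homomorphism $\varphi\colon H_i(\underline{a}^n)\to J$, some stage $m=m(\varphi)$ with $\varphi\circ\tau_{mn}=0$ (where $\tau_{mn}\colon H_i(\underline{a}^m)\to H_i(\underline{a}^n)$ is the transition map); your step ``the transition map becomes zero after applying $\mathrm{Hom}(-,J)$'' needs one $m$ that kills \emph{all} such $\varphi$ simultaneously, which does not follow from vanishing of the colimit. The finiteness you invoke to repair this is also unavailable: $H_i(\underline{a}^n)$ is $Z_i/B_i$ with $Z_i$ the kernel of a map of finite free modules, and over a non-Noetherian ring such kernels need not even be finitely generated (already $H_1(a^n)=\operatorname{ann}(a^n)$ can fail to be finitely generated) --- and the non-Noetherian case is the entire content of the theorem, since over Noetherian rings every sequence is weakly proregular. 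The fix is simpler than the cogenerator route and is exactly the paper's: fix $n$ and choose an embedding $\varepsilon\colon H_i(\underline{a}^n)\hookrightarrow J$ into \emph{some} injective module $J$ (enough injectives); the hypothesis gives $\check{H}^i(\underline{a},J)\cong H^i_I(J)=0$, i.e.\ $\varinjlim_m\mathrm{Hom}_A(H_i(\underline{a}^m),J)=0$, so the single element $\varepsilon$ dies at some stage, meaning $\varepsilon\circ\tau_{mn}=0$ for some $m\geq n$; injectivity of $\varepsilon$ then forces $\tau_{mn}=0$, which is weak proregularity.
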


%To prepare for this, 
In section 2, we summarise the $\delta$-functors. It is also mentioned in \cite[Chap.3.1]{Hartshorne} without proof. And in section 3 we resume the definition of \v{C}ech cohomology in commutative algebra. In section 4, we present the theory of weakly proregular sequences, following \cite[Sect.2]{Schenzel}. Finally, We introduce the main results in section 5.

\section{$\delta$-functors}

\begin{defi}
	Let $\mathscr{A},\mathscr{B}$ be an Abelian categories, and $\mathscr{A}$ has enough injectives. Let  $F\colon \mathscr{A}\to\mathscr{B}$ be an additive left exact functor. $I^\bullet$ denote an injective resolution of $A\in\mathscr{A}$. The functor;
	\[R^iF\colon\mathscr{A}\to\mathscr{B};A\mapsto H^i(F(I^\bullet))\]
	is called the \textbf{right derived functor} of $F$.
\end{defi}

Note that derived functors are independent up to natural transformation of the selection of injective resolution. The following is a characteristic property of the derived functor.

\begin{prop}\label{prop:prop_of_derived_functor}
	Let $\mathscr{A},\mathscr{B}$ be Abelian categories, and assume $\mathscr{A}$ has enough injectives. Let  $F\colon \mathscr{A}\to\mathscr{B}$ be additive left exact functor. Then
	\begin{enumerate}
		\item $R^0F\cong F$~(as functors).
		\item For any exact sequence in $\mathscr{A}$;
		\[\ses[f][g]{A_1}{A_2}{A_3}\]
		for each $i\geq0$, there are \textbf{connecting morphisms} $\delta^i\colon R^iF(A_3)\to R^{i+1}F(A_1)$ such that;
		\[\begin{tikzcd}[row sep=tiny, column sep=scriptsize]
			0\nxcell F(A_1)\nxcell[F(f)]F(A_2)\nxcell[F(g)]F(A_3)\nxcell[\delta^0]{}\cdots\\
			{}\nxcell[\delta^{i-1}]R^iF(A_1)\nxcell[R^iF(f)]R^iF(A_2)\nxcell[R^iF(g)]R^iF(A_3)\nxcell[\delta^i]\cdots
		\end{tikzcd}\]
		is exact sequence in $\mathscr{B}$.
		\item Given a commutative diagram in $\mathscr{A}$ of the form (where the rows are exact);
		\[\begin{tikzcd}
			0\nxcell A_1\arrow[d,"\gamma"]\nxcell[f]A_2\arrow[d,"\beta"]\nxcell[g]A_3\arrow[d,"\alpha"]\nxcell0\\
			0\nxcell B_1\nxcell[f']B_2\nxcell[g']B_3\nxcell0
		\end{tikzcd}\]
		for any $i\geq0$ the following diagram;
		\[\begin{tikzcd}
			R^iF(A_3)\arrow[d,"R^iF(\alpha)"]\nxcell[]R^{i+1}F(A_1)\arrow[d,"R^{i+1}F(\gamma)"]\\
			R^iF(B_3)\nxcell[]R^{i+1}F(B_1)
		\end{tikzcd}\]
		 is commutative in $\mathscr{B}$.
		\item For each injective object $I\in\mathscr{A}$, for any $i>0$, $R^iF(I)=0$.
	\end{enumerate}
\end{prop}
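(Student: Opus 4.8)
The strategy is to reduce the whole statement to two standard facts about injective resolutions in an abelian category with enough injectives: the \emph{comparison theorem} (a morphism $A\to A'$ lifts to a morphism of any chosen injective resolutions, uniquely up to chain homotopy, and is a homotopy equivalence when $A\to A'$ is an isomorphism) and the \emph{horseshoe lemma} (a short exact sequence of objects admits injective resolutions fitting into a short exact sequence of complexes that is split in each degree). Granting these, parts (1) and (4) are immediate. For (1), an injective resolution $0\to A\to I^0\to I^1\to\cdots$ is exact by definition, so left exactness of $F$ keeps $0\to F(A)\to F(I^0)\to F(I^1)$ exact; hence $R^0F(A)=H^0(F(I^\bullet))=\ker(F(I^0)\to F(I^1))\cong F(A)$, and naturality in $A$ follows by applying the comparison theorem to a morphism $A\to A'$. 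For (4), if $I$ is injective then the complex equal to $I$ in degree $0$ and $0$ elsewhere is an injective resolution of $I$; applying $F$ gives a complex concentrated in degree $0$, so $R^iF(I)=0$ for all $i>0$.

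The substance is in (2) and (3). Given $0\to A_1\xrightarrow{f}A_2\xrightarrow{g}A_3\to 0$, use the horseshoe lemma to choose injective resolutions $I_j^\bullet$ of $A_j$ and a short exact sequence of complexes $0\to I_1^\bullet\to I_2^\bullet\to I_3^\bullet\to0$ that is split in each degree; the degreewise splitting exists because each $I_1^n$ is injective. Since $F$ is additive it preserves degreewise split short exact sequences of complexes, so $0\to F(I_1^\bullet)\to F(I_2^\bullet)\to F(I_3^\bullet)\to0$ is again exact, and its long exact sequence in cohomology — with initial terms identified via part (1) — is exactly the sequence in (2), with $\delta^i$ the cohomological connecting map. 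To see $\delta^i$ is independent of the chosen resolutions, compare two choices by lifting the identity maps $A_j\to A_j$ via the comparison theorem to a homotopy equivalence between the two short exact sequences of complexes, apply $F$, and invoke naturality of the long exact cohomology sequence. Part (3) is the same argument carried out relative to a morphism: lift the given commutative diagram with exact rows to a morphism between the two chosen short exact sequences of injective resolutions (comparison theorem, up to homotopy), apply $F$, and use that the cohomology long exact sequence is natural for morphisms of short exact sequences of complexes; the relevant square already commutes at the level of complexes.

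I expect the only real difficulty to be bookkeeping rather than ideas. The delicate points are: arranging the horseshoe lemma so the sequence of resolutions is \emph{degreewise split}, which is precisely what lets the merely additive (not exact) functor $F$ preserve exactness; and verifying that the ``up to chain homotopy'' ambiguity in each application of the comparison theorem disappears after passing to cohomology, so that $\delta^i$ is well defined and the diagrams in (2) and (3) commute unambiguously. None of this uses any special feature of $F$ beyond additivity and left exactness, so I would prove it at the stated level of generality, in the spirit of \cite{Hartshorne}.
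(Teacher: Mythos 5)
The paper itself offers no proof of this proposition---it explicitly defers to ``a textbook on homological algebra''---so there is no internal argument to compare against; what you have written is precisely the standard textbook route, and as a sketch it is correct. Parts (1) and (4) are handled exactly as you say (left exactness applied to $0\to A\to I^0\to I^1$, and the length-zero resolution of an injective object), and (2) follows from the horseshoe lemma together with the fact that an additive functor preserves degreewise split short exact sequences of complexes, with the comparison theorem disposing of the dependence on the chosen resolutions. The one place where your ``bookkeeping'' label conceals a genuine construction is (3): to get the commuting square involving $\delta^i$ you need a morphism between the two horseshoe short exact sequences of resolutions that lifts $(\gamma,\beta,\alpha)$ and is \emph{strictly} compatible with the maps of complexes. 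This is not the comparison theorem applied to the three columns separately---independently chosen lifts of $\gamma,\beta,\alpha$ only make the relevant squares commute up to homotopy, and homotopy commutativity of the squares is not by itself enough to conclude that the connecting morphisms intertwine. The standard fix is to build the middle chain map degree by degree, using the splittings $I_2^n\cong I_1^n\oplus I_3^n$ and injectivity of the terms of the resolution of $B_1$, so that the diagram of complexes commutes on the nose; then naturality of the long exact cohomology sequence gives (3) directly. With that step spelled out, your argument is complete and is exactly the proof the paper's cited references give.
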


See a textbook on homological algebra for the proof of this proposition. The $\delta$-functor can be thought of as an extract of the above property.

\begin{defi}
	Let $\mathscr{A},\mathscr{B}$ be an Abelian categories. Families of additive functors $T^\bullet\coloneqq\{T^i\}$ are called \textbf{$\delta$-functors} when the following conditions hold;
	\begin{enumerate}
		\item For any exact sequence in $\mathscr{A}$;
		\[\ses[f][g]{A_1}{A_2}{A_3}\]
		for each $i\geq0$, there are \textbf{connecting morphism} $\delta^i\colon T^i(A_3)\to T^{i+1}(A_1)$ such that;
		\[\begin{tikzcd}[row sep=tiny, column sep=scriptsize]
			0\nxcell T^0(A_1)\nxcell[T^0(f)]T^0(A_2)\nxcell[T^0(g)]T^0(A_3)\nxcell[\delta^0]{}\cdots\\
			{}\nxcell[\delta^{i-1}]T^i(A_1)\nxcell[T^i(f)]T^i(A_2)\nxcell[T^i(g)]T^i(A_3)\nxcell[\delta^i]\cdots
		\end{tikzcd}\]
		is exact sequence in $\mathscr{B}$.
		\item Given a commutative diagram in $\mathscr{A}$ of the form (where the rows are exact);
		\[\begin{tikzcd}
			0\nxcell A_1\arrow[d,"\alpha"]\nxcell[f]A_2\arrow[d,"\beta"]\nxcell[g]A_3\arrow[d,"\gamma"]\nxcell0\\
			0\nxcell B_1\nxcell[f']B_2\nxcell[g']B_3\nxcell0
		\end{tikzcd}\]
		then for any $i\geq0$ the following diagram;
		\[\begin{tikzcd}
			T^i(A_3)\arrow[d,"T^i(\gamma)"]\nxcell[]T^{i+1}(A_1)\arrow[d,"T^{i+1}(\alpha)"]\\
			T^i(B_3)\nxcell[]T^{i+1}(B_1)
		\end{tikzcd}\]
		is commutative in $\mathscr{B}$.
	\end{enumerate} 
\end{defi}

Let us define that two $\delta$-functors are isomorphic in the following way. Let $T^\bullet, U^\bullet$ be a $\delta$-functor. Families of natural transformation $\theta^\bullet=\{\theta^i\colon T^i\Rightarrow U^i\}$ is called \textbf{morphism of $\delta$-fanctors} when for each exact sequence ;
\[\ses{A_1}{A_2}{A_3}\]
in $\mathscr{A}$, the following diagram;
\[\begin{tikzcd}
	T^i(A_3)\darrow[\theta^i_{A_3}]\nxcell[\delta^i_T]T^{i+1}(A_1)\darrow[\theta^{i+1}_{A_1}]\\
	U^i(A_3)\nxcell[\delta^i_U]U^{i+1}(A_1)
\end{tikzcd}\]
is commutative. An isomorphism is a morphism which has a two-sided inverse.

\begin{defi}
	Let $\mathscr{A},\mathscr{B}$ be Abelian categories. The $\delta$-functor $T^\bullet$ is called \textbf{universal} when for each $\delta$-functor $U^\bullet$ and natural transformation $\theta\colon T^0\Rightarrow U^0$, there is a unique morphism of $\delta$-functors $\theta^\bullet\colon T^\bullet\to U^\bullet$ such that $\theta^0=\theta$. 
\end{defi}

By definition, two universal $\delta$-functors such that $T^0=U^0$ are isomorphic up to unique isomorphism. So for each additive functor $F\colon \mathscr{A}\to\mathscr{B}$, a universal $\delta$-functor $T^\bullet$ with $T^0=F$ is unique up to unique isomorphism if it exists. A universal $\delta$-functor $T^\bullet$ with such a property is called a right satellite functor of $F$.

The following property gives a condition for the $\delta$-functor to be universal, which shows that the derived functor is so if $\mathscr{A}$ has enough injectives.

\begin{defiprop}\label{prop:effaceable->universal}
	Let $\mathscr{A},\mathscr{B}$ be Abelian categories, $F$ an additive functor. $F$ said to be \textbf{effaceable} if for each $A\in\mathscr{A}$, there are $M\in\mathscr{A}$ and injection (monomorphism) $u\colon A\to M$ such that $F(u)=0$. A $\delta$-functor $T^\bullet$ is universal if for each $i>0$, $T^i$ is effaceable.
\end{defiprop}

\begin{proof}
	Let $U^\bullet$ be $\delta$-functor and $\theta\colon T^0\Rightarrow U^0$ natural transformation. We show that there exists uniquely a morphism of $\delta$-functor $\theta^\bullet\colon T^\bullet\to U^\bullet$ such that $\theta^0=\theta$. We construct it inductively. For any $A\in\mathscr{A}$, there is injection $u\colon A\to M$ such that $T^1(u)=0$ since $T^1$ is effaceable. Let $C$ be the cokernel of $u$. We consider the long exact sequences induced by $\ses[u][\pi]{A}{M}{C}$. So we get the following commutative diagram.
	\[\begin{tikzcd}
		T^0(M)\darrow[\theta_M]\nxcell[T^0(\pi)]\darrow[\theta_C]T^0(C)\nxcell[\delta^0_T]T^1(A)\arrow[d,dashed]\nxcell[T^1(u)=0] 0\\
		U^0(M)\nxcell[U^0(\pi)]U^0(C)\nxcell[\delta^0_U]U^1(A)
	\end{tikzcd}\]

	Now $\theta^1_{A,u}\coloneqq\delta^0_U\circ\theta_C\circ(\delta^0_T)^{-1}\colon T^1(A)\to U^1(A)$ is well-defined since the rows are exact. We show $\theta^1_{A,u}$ is independent of the choice of $u$. Let $u'\colon A\to M'$ be an injection such that $T^1(u')=0$. $M\sqcup_A M'$ denote co-fibre products of $M$ and $M'$ on $A$. Then we get an injection $u''\colon A\to M\sqcup M'$ such that $T^1(u'')=0$. Let $C''$ be the cokernel of $u''$. Then the following diagram;
	\[\begin{tikzcd}
		&T^0(C)\arrow[rr]\arrow[dd,"\theta_{C''}",near start]&&T^1(A)\arrow[rr,crossing over]\arrow[dd,"\theta_{A,u}^1",near start]&&0\\
		T^0(C'')\arrow[from={ur}]\arrow[dd,"\theta_{C''}",near start]\arrow[rr, crossing over]&&T^1(A)\arrow[ur,equal]\arrow[rr, crossing over ]&&0\\
		&U^0(C)\arrow[rr]&&U^1(A)\\
		U^0(C'')\arrow[from={ur}]\arrow[rr]&&U^1(A)\arrow[ur,equal]\arrow[from={uu},"\theta_{A,u''}^1",near start, crossing over]
	\end{tikzcd}\] 
	is commutative. So we have $\theta_{A,u}^1=\theta_{A,u''}^1$, similarly we obtain $\theta_{A,u'}^1=\theta_{A,u''}^1$, then we get $\theta_{A,u}^1=\theta_{A,u'}^1$. So $\theta_A^1$ is independent of the choice of $u$.
	
	Secondly, we show that for each $f\in\hom_{\mathscr{A}}(A,B)$ the following diagram;
	\[\begin{tikzcd}
		T^1(A)\darrow[\theta_A^1]\nxcell[T^1(f)]T^1(B)\darrow[\theta_B^1]\\
		U^1(A)\nxcell[U^1(f)]U^1(B)
	\end{tikzcd}\]
	is commutative to prove $\theta^1$ is a natural transformation. For any injections $u\colon A\to M$ and $v\colon B\to N$ with $T^1(u)=T^1(v)=0$, we take co-fibre product;
	\[\begin{tikzcd}
		A\darrow[v\circ f]\nxcell[u]M\darrow\\
		N\nxcell[u']M\sqcup_A N
	\end{tikzcd}\]
    then $u'$ is injective. So we have an injection $u'\circ v\colon B\to M\sqcup N$ with $T^1(u'\circ v)=0$. Then we replace $N$ by $M\sqcup N$ and get the following commutative diagram with exact rows;
    	\[\begin{tikzcd}
    	0\nxcell A\darrow[f]\nxcell M\darrow\nxcell C\arrow[d,dashed]\nxcell 0\\
    	0\nxcell B\nxcell N\nxcell C'\nxcell 0
    \end{tikzcd}\]
	So $\theta^1$ is a natural transformation since the following diagram;
	\[\begin{tikzcd}
		&T^0(C)\arrow[dl]\arrow[dd]\arrow[rr]&&T^1(A)\arrow[ld,"T^1(f)"]\arrow[dd,"\theta_A^1",near start]\arrow[rr]&&0\\
		T^0(C')\arrow[dd]\arrow[rr,crossing over]&&T^1(B)\arrow[rr,crossing over]&&0\\
		&U^0(C)\arrow[ld]\arrow[rr]&&U^1(A)\arrow[ld,"U^1(f)"]\\
		U^0(C')\arrow[rr,""]&&U^1(B)\arrow[from=uu,"\theta_B^1",near start,crossing over]
	\end{tikzcd}\]
	is commutative.
	
	Finally, we show that $\theta_A^1$ is commutative with the connecting morphism. Let $\ses{A_1}{A_2}{A_3}$ be the short exact sequence in $\mathscr{A}$, we use the same method as above for the injection $u\colon A_1\to M$ with $T^1(u)=0$ so that each row of the following commutative diagram is exact.
	\[\begin{tikzcd}
		0\nxcell A\arrow[d,equal]\nxcell A_2\darrow\nxcell A_3\darrow\nxcell0\\
		0\nxcell A_1\nxcell M\nxcell C\nxcell0
	\end{tikzcd}\]
	We consider the following diagram.
	\[\begin{tikzcd}
		&T^0(A_3)\arrow[ld,"\theta_{A_3}",swap]\arrow[rr,"\delta",near start]\arrow[dd]&&T^1(A_1)\arrow[ld,"\theta_{A_1}^1"]\arrow[dd,equal]\\
		U^0(A_3)\arrow[dd]\arrow[rr,"\delta",near start,crossing over]&&U^1(A_1)\\
		&T^0(C)\arrow[ld,"\theta_C",swap]\arrow[rr,"\delta",near start]&&T^1(A_1)\arrow[ld,"\theta_{A_1}^1"]\\
		U^0(C)\arrow[rr,"\delta",near start]&&U^1(A_1)\arrow[from=uu,equal,crossing over]
	\end{tikzcd}\]
	%The bottom surface is commutative from the configuration of $\theta_{A_1}^1$, and the front and back surfaces are commutative because $T^\bullet$ and $U^\bullet$ are $\delta$-functors. Finally the left surface is commutative because $\theta^0$ is a natural transformation, and the top surface is also commutative.
	
	The desired commutativity of $\theta_{A_1}^1, \theta_{A_3}$ and $\delta$ follows from commutativity of other squares, which follow from the construction of $\theta_{A_1}^1$ and the facts that $T^\bullet$ and $U^\bullet$ are $\delta$-functor and that $\theta$ is a natural transformation. %%thanks for Y. M.
	
	This shows that $\theta^1$ is a natural transformation commutative with connecting morphism, and its uniqueness can be seen from its construction (the universality of cokernel). In this way $\theta^1$ to $\theta^2$ can be created and continued inductively.
\end{proof}

\begin{cor}
	Let $\mathscr{A}, \mathscr{B}$ be Abelian categories, and $\mathscr{A}$ has enough injectives. Let $T^\bullet\colon  \mathscr{A}\to\mathscr{B}$ be a universal $\delta$-functor, then $T^0$ is left-exact and for each $i\geq 0$ there is a natural isomorphism $T^i\cong R^iT^0$.
\end{cor}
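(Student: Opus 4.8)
The plan is to reduce everything to the uniqueness of universal $\delta$-functors. First I would establish left exactness of $T^0$. Given a short exact sequence $0\to A_1\to A_2\to A_3\to 0$ in $\mathscr{A}$, condition (1) in the definition of a $\delta$-functor supplies a long exact sequence starting with
\[0\longrightarrow T^0(A_1)\longrightarrow T^0(A_2)\longrightarrow T^0(A_3)\longrightarrow T^1(A_1)\longrightarrow\cdots,\]
and exactness of the first three terms is precisely the statement that $T^0$ sends the short exact sequence to a left exact sequence; together with additivity of $T^0$ this is the definition of a left exact functor. Since $\mathscr{A}$ has enough injectives, the right derived functors $R^iT^0$ are then defined.

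Next I would check that $(R^iT^0)_{i\ge0}$ is a universal $\delta$-functor. By Proposition \ref{prop:prop_of_derived_functor}(2)--(3) it is a $\delta$-functor, and by \ref{prop:effaceable->universal} it suffices to show that $R^iT^0$ is effaceable for every $i>0$. Given $A\in\mathscr{A}$, pick a monomorphism $u\colon A\hookrightarrow I$ with $I$ injective; then $R^iT^0(u)$ is a morphism $R^iT^0(A)\to R^iT^0(I)$, and $R^iT^0(I)=0$ by Proposition \ref{prop:prop_of_derived_functor}(4), so $R^iT^0(u)=0$. Hence $(R^iT^0)_i$ is universal.

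Finally I would invoke uniqueness. Proposition \ref{prop:prop_of_derived_functor}(1) gives a natural isomorphism $\theta\colon T^0\xrightarrow{\ \sim\ }R^0T^0$. Since $T^\bullet$ is universal, $\theta$ extends uniquely to a morphism of $\delta$-functors $\theta^\bullet\colon T^\bullet\to R^\bullet T^0$; since $R^\bullet T^0$ is universal, $\theta^{-1}$ extends to $\psi^\bullet\colon R^\bullet T^0\to T^\bullet$. The composite $\psi^\bullet\circ\theta^\bullet\colon T^\bullet\to T^\bullet$ is a morphism of $\delta$-functors that is $\mathrm{id}$ in degree $0$, so it equals $\mathrm{id}_{T^\bullet}$ by the uniqueness clause in the definition of universality (applied with $U^\bullet=T^\bullet$ and $\theta=\mathrm{id}$); symmetrically $\theta^\bullet\circ\psi^\bullet=\mathrm{id}$. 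Thus each $\theta^i\colon T^i\to R^iT^0$ is a natural isomorphism, which is the claim.

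The proof is formal given the results already proved; the only place to be careful is the bookkeeping in the last paragraph — keeping straight which universality is used for which of the two extensions, and confirming the two composites are the respective identities. No serious obstacle arises, since the vanishing $R^iT^0(I)=0$ on injectives, combined with "enough injectives", is exactly effaceability in the sense of \ref{prop:effaceable->universal}.
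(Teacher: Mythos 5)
Your proposal is correct and follows essentially the same route as the paper: left exactness from the long exact sequence, universality of $R^\bullet T^0$ via effaceability on injectives (Proposition \ref{prop:prop_of_derived_functor}(4) plus \ref{prop:effaceable->universal}), and then uniqueness of universal $\delta$-functors agreeing in degree $0$. The only difference is that you spell out the two-sided extension argument for the isomorphism, which the paper delegates to its earlier remark that universal $\delta$-functors with the same degree-zero part are uniquely isomorphic.
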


\begin{proof}
	$T^0$ is left exact by the definition of $\delta$-functor, so there are right derived functors $R^iT^0$. For each $i>0$, $R^iT^0$ is effaceable by (4) of \ref{prop:prop_of_derived_functor}, then $R^\bullet T^0$ is a universal $\delta$-functor. Now $R^0T^0=T^0$, so there is a unique isomorphism $R^\bullet T^0\cong T^\bullet$ by universality. 
\end{proof}

\section{\v{C}ech cohomology and Koszul comoplex}

In this section, we review the \v{C}ech cohomology of rings and modules. Let $A$ be a ring and fix a sequence $a_1, \dots, a_r$ of elements of $A$. For each $I=\{j_1,\dots,j_i\}\subset\{1,\dots,r\}~(j_1<\dots<j_i)$, let $a_I=a_{j_1}\dots a_{j_i}$. $e_1,\dots, e_r $ denotes the standard basis of $A^r$ and let $e_I=e_{j_1}\wedge\dots\wedge e_{j_i}$. 

\begin{defi}\label{defi:CechCohomology}
	Let $A$ be a ring, $\underline{a}=a_1,\dots, a_r\in A$. For each $1\leq i\leq r$, $C^i(\underline{a})$ is the complex defined by following equations;
	\[C^i(\underline{a})=\sum_{\# I= i}A_{a_I}e_I,\]
	\[d^i\colon C^i(\underline{a})\to C^{i+1}(\underline{a});e_I\mapsto\sum_{j=1}^n e_I\wedge e_j.\]
	And for $i=0$, defined by $C^0(\underline{a})=A, d^0\colon a\mapsto\sum_{j=1}^r ae_j$ so $C^\bullet(\underline{a})$ form a complex. It called \textbf{\v{C}ech complex}. $\check{H}^i(\underline{a})$ denote the cohomology of this complex and it is called \textbf{\v{C}ech cohomology}.
\end{defi}

For $A$-module $M$, we define $C^\bullet(\underline{a},M)\coloneqq C^\bullet(\underline{a})\otimes M$. Here  $\check{H}^i(\underline{a},M)$ denote the $i$-th cohomology of $C^i(\underline{a},M)$. This form complex, denoted by $\check{H}^\bullet(\underline{a},M)$.

\begin{prop}\label{prop:cech is delta}
	Let $A$ be a ring, for each $a_1,\dots,a_r\in A$, $\check{H}^\bullet(\underline{a},-)$ is a $\delta$-functor.
\end{prop}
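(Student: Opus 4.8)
The plan is to realise $\check H^\bullet(\underline a,-)$ as the cohomology of the construction $M\mapsto C^\bullet(\underline a,M)=C^\bullet(\underline a)\otimes_A M$ and to exploit that $C^\bullet(\underline a)$ consists of flat $A$-modules. First I would note that each term $C^i(\underline a)=\bigoplus_{\#I=i}A_{a_I}$ is flat over $A$, being a direct sum of localisations of $A$. Hence for any short exact sequence $0\to M_1\to M_2\to M_3\to 0$ of $A$-modules, applying $C^i(\underline a)\otimes_A-$ in each degree $i$ is exact, and since the differentials $d^i\otimes M$ are natural in $M$ we obtain a short exact sequence of cochain complexes
\[0\to C^\bullet(\underline a,M_1)\to C^\bullet(\underline a,M_2)\to C^\bullet(\underline a,M_3)\to 0.\]

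Next I would invoke the standard long exact sequence in cohomology attached to a short exact sequence of cochain complexes. This furnishes connecting homomorphisms $\delta^i\colon\check H^i(\underline a,M_3)\to\check H^{i+1}(\underline a,M_1)$ together with the long exact sequence required in the first condition of a $\delta$-functor; in degree $0$ it begins $0\to\check H^0(\underline a,M_1)\to\check H^0(\underline a,M_2)\to\check H^0(\underline a,M_3)$, so in particular $\check H^0(\underline a,-)$ is left exact, and each $\check H^i(\underline a,-)$ is visibly additive because $-\otimes_A M$ and passage to cohomology are additive. For the second condition, a commutative diagram of short exact sequences of $A$-modules, tensored with the complex $C^\bullet(\underline a)$, becomes a commutative diagram of short exact sequences of complexes; the naturality of the connecting homomorphism in the long exact cohomology sequence — a standard fact, provable directly from the snake lemma — gives the commuting squares relating $\check H^i(\underline a,\gamma)$ and $\check H^{i+1}(\underline a,\alpha)$.

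The only step that is not a pure formality is the flatness of the \v{C}ech complex, i.e.\ that each $A_{a_I}$ is flat over $A$ and that arbitrary direct sums of flat modules are flat; once this is in place, exactness of $-\otimes_A C^\bullet(\underline a)$ on short exact sequences is immediate and the remainder is a direct appeal to the homological algebra of short exact sequences of complexes. I therefore do not anticipate any serious obstacle in carrying this out.
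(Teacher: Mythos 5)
Your proposal is correct and follows essentially the same route as the paper: you use flatness of each term $C^i(\underline{a})=\bigoplus_{\#I=i}A_{a_I}$ to turn a short exact sequence of modules into a short exact sequence of complexes, and then invoke the long exact cohomology sequence and the naturality of its connecting maps. The paper's proof is just a terser version of exactly this argument, so no changes are needed.
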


\begin{proof}
	Consider an exact sequence $\ses{M_1}{M_2}{M_3}$ of $A$-modules.
	Since $C^\bullet(\underline{a},M)=C^\bullet(\underline{a})\otimes M$ and each component of the \v{C}ech complex is a flat $A$-modules, the following sequence of complexes is exact;
	\[\ses{C^\bullet(\underline{a},M_1)}{C^\bullet(\underline{a},M_2)}{C^\bullet(\underline{a},M_3)}\]
	then there are connection morphisms. So $\check{H}^\bullet(\underline{a},-)$ is $\delta$-functor.
\end{proof}

For the result we want, we need to look at the relationship between \v{C}ech complex and Koszul complex.

%Koszul homology の説明

For $\underbar{a}=a_1,\dots,a_r\in A$, let $\{e_i\}$ be the standard basis of a free $A$-module $A^r$. $f\colon A^r\to A;e_i\mapsto a_i$ induces a chain complex $K_\bullet(\underbar{a})$. In other words $K_\bullet(\underbar{a})$ is the complex defined by the following equation;
\[K_i(\underline{a})=\bigwedge^i A^r,\]
\[d_i\colon K_i(\underline{a})\to K_{i-1}(\underline{a});x_1\wedge\dots\wedge x_i\mapsto \sum_{j=1}^i(-1)^{j+1} f(x_j) x_1\wedge\dots\wedge\widehat{x}_j\wedge\dots\wedge x_i.\]
Note that $K_\bullet(\underline{a})$ does not depend on the order of $a_i$ .

We get a co-chain complex $K^\bullet(\underbar{a})$ from a contravariant functor $\hom(-,A)$;
\[K^\bullet(\underline{a})\colon \begin{tikzcd}
	0\nxcell A\nxcell\hom(K_1(\underline{a}),A)\nxcell\cdots
\end{tikzcd}.\]

$K^\bullet(\underbar{a})$ is called a Koszul complex. For each $A$-module $M$, $K^\bullet(\underline{a},M)=K^\bullet(\underbar{a})\otimes M$. Here $H^i(\underbar{a},M)$ denote the cohomology of Koszul complex.
\begin{lem}
	Let $A$ be a ring and $\underline{a}=a_1,\dots,a_r\in A$. For each $1\leq i\leq r$;
	\[\varphi^i\colon K^i(\underline{a})\to C^i(\underline{a});(e_I)^\ast\mapsto(1/a_I)e_I\]
	is a morphism of complexes.
\end{lem}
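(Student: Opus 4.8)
The plan is to verify directly that for each $i$ the square
\[
\begin{tikzcd}
K^i(\underline a)\arrow[r,"d_K^i"]\arrow[d,"\varphi^i"] & K^{i+1}(\underline a)\arrow[d,"\varphi^{i+1}"]\\
C^i(\underline a)\arrow[r,"d_C^i"] & C^{i+1}(\underline a)
\end{tikzcd}
\]
commutes. Since $d_K^i,d_C^i,\varphi^i,\varphi^{i+1}$ are all $A$-linear and $K^i(\underline a)=\hom(K_i(\underline a),A)$ is free with basis $\{(e_I)^*\}_{\#I=i}$, it suffices to evaluate the two composites on each $(e_I)^*$. The only ingredients are the definitions recalled above: $d_K^i=\hom(d_{i+1},A)$ is the $A$-dual of the Koszul boundary $d_{i+1}$; the \v{C}ech differential sends $e_I$ to $\sum_j e_I\wedge e_j$, extended along the localization maps $A_{a_I}\to A_{a_{I\cup\{j\}}}$; and, for $j\notin I$, one has the elementary identity $a_{I\cup\{j\}}=a_I a_j$.

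First I would go around the top and right. Dualizing the explicit formula for $d_{i+1}$ and reading off coefficients against the basis $\{e_J\}_{\#J=i+1}$ of $K_{i+1}(\underline a)$ gives
\[
d_K^i\bigl((e_I)^*\bigr)=\sum_{j\notin I}\varepsilon_{I,j}\,a_j\,(e_{I\cup\{j\}})^*,
\]
where $\varepsilon_{I,j}\in\{\pm1\}$ is the Koszul sign attached to the position of $j$ among the sorted indices of $I\cup\{j\}$. Applying $\varphi^{i+1}$ and cancelling $a_j$ via $a_{I\cup\{j\}}=a_Ia_j$ yields
\[
\varphi^{i+1}\bigl(d_K^i((e_I)^*)\bigr)=\sum_{j\notin I}\varepsilon_{I,j}\,\frac{1}{a_I}\,e_{I\cup\{j\}},
\]
with $1/a_I$ now regarded as an element of $A_{a_{I\cup\{j\}}}$.

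Next I would go around the left and bottom. Since $\varphi^i((e_I)^*)=(1/a_I)e_I$, applying $d_C^i$ and using that $e_I\wedge e_j=\eta_{I,j}\,e_{I\cup\{j\}}$ for $j\notin I$ (with $\eta_{I,j}$ the sign of the permutation sorting $e_{j_1}\wedge\dots\wedge e_{j_i}\wedge e_j$) gives $\sum_{j\notin I}\eta_{I,j}(1/a_I)e_{I\cup\{j\}}$, again with $1/a_I$ transported along $A_{a_I}\to A_{a_{I\cup\{j\}}}$. Comparing the two displays, the square commutes as soon as $\varepsilon_{I,j}=\eta_{I,j}$ for every $I$ and every $j\notin I$. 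This sign identity is the one point requiring care: each side is governed by how many elements of $I$ lie to one side of $j$, so the check is a short count, but it is precisely where the sign conventions for the Koszul and \v{C}ech complexes must be matched up. Granting it, summation over $j$ gives $d_C^i\circ\varphi^i=\varphi^{i+1}\circ d_K^i$; the degenerate case $i=0$ (where $\varphi^0=\operatorname{id}_A$ and $d_C^0(1)=\sum_j e_j$) is immediate, so $\varphi^\bullet$ is a morphism of complexes. The main obstacle, such as it is, is thus entirely the bookkeeping of signs; the denominator cancellation $1/a_I=a_j/a_{I\cup\{j\}}$ and the localization formalities are automatic.
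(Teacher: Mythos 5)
Your proposal follows exactly the route of the paper's proof: evaluate both composites on the dual basis $\{(e_I)^*\}$, cancel $a_j$ via $a_{I\cup\{j\}}=a_Ia_j$, and compare coefficients; the paper records this as $\varphi^{i+1}(\delta^i(e_I^*))=\sum_{j\notin I}\frac{a_j}{a_Ia_j}\,e_I\wedge e_j=d^i(\varphi^i(e_I^*))$. However, the one step you defer --- the sign identity $\varepsilon_{I,j}=\eta_{I,j}$ --- is the only substantive content, and under the conventions you (and the paper) literally state it does not come out as an identity: if $j$ sits in position $k$ of the sorted set $I\cup\{j\}$ and $\#I=i$, then dualizing the stated Koszul boundary $d_{i+1}$ (with its signs $(-1)^{m+1}$) gives $\varepsilon_{I,j}=(-1)^{k+1}$, whereas $e_I\wedge e_j=(-1)^{i+1-k}e_{I\cup\{j\}}$ gives $\eta_{I,j}=(-1)^{i+1-k}$, so the two agree only up to the global factor $(-1)^i$. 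Thus with $d_K^i=\hom(d_{i+1},A)$ taken without signs and the \v{C}ech differential wedging $e_j$ on the right, the square commutes only up to $(-1)^i$, and "granting" the sign identity grants something false for odd $i$. The lemma is still correct once a convention is fixed: insert the standard sign $(-1)^i$ into the dualized differential, or wedge $e_j$ on the left in the \v{C}ech differential, or rescale $\varphi^i$ by $(-1)^{i(i-1)/2}$; any of these makes your computation close, and none affects the colimit comparison used later. To be fair, the paper's own proof elides the same point by writing $\delta^i(e_I^*)(e_J)=a_j$ with no sign, so your write-up is at the same level of rigor and has the merit of isolating where the care is needed --- but you should actually carry out the count and state the convention that makes it work, rather than leave it as a deferred check.
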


\begin{proof}
	Let $\delta^i$ be the derivative in the Koszul complex. Then;
	\[\delta^i(e_I^\ast)(e_J)=\begin{cases}
		a_j&(j\not\in I, J=I\cup\{j\})\\
		0&(\text{otherwise})
	\end{cases}\]
	So;
	\[\varphi^{i+1}\circ\delta^i(e_I^\ast)=\sum_{j\not\in I}\frac{a_j}{a_Ia_j} e_I\wedge e_j=\sum_{j\not\in I} \frac{1}{a_I} e_I\wedge e_j\]
	is equal to $d^i\circ\varphi^i(e_I^\ast)$.
\end{proof}

For any pair $n\leq m$;
\[\varphi_{mn}^\bullet\colon K^\bullet(\underline{a}^n)\to K^\bullet(\underline{a}^m)
;(e_I)^\ast\mapsto(a_I)^{m-n}(e_I)^\ast\]
then $\{K^\bullet(\underline{a}^n)\}_{n\in\N}$ is an inductive system.

\begin{prop}
	Let $A$ be a ring, $a_1,\dots,a_r\in A$. Then;
	\[\ilim K^\bullet(\underline{a}^n)\cong C^\bullet(\underline{a}).\]
\end{prop}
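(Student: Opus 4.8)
The plan is to apply the previous lemma to each power $\underline{a}^n$ so as to obtain a compatible family of chain maps into the \v{C}ech complex, and then to identify the induced map out of the colimit with the canonical comparison map, checking it is an isomorphism in each degree. The starting observation is that $C^\bullet(\underline{a}^n)=C^\bullet(\underline{a})$ for every $n$: in cohomological degree $i$ the $I$-component (with $\#I=i$) of $C^\bullet(\underline{a}^n)$ is $A_{(a_I)^n}$, and inverting $a_I$ is the same as inverting $a_I^n$, while the \v{C}ech differential is insensitive to the exponent. Applying the lemma to the sequence $\underline{a}^n$ therefore yields a morphism of complexes $\varphi_n^\bullet\colon K^\bullet(\underline{a}^n)\to C^\bullet(\underline{a})$, given in degree $i=\#I$ by $(e_I)^\ast\mapsto (1/a_I^n)\,e_I$.

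Next I would check that this family is compatible with the transition maps $\varphi_{mn}^\bullet$ of the inductive system, i.e. $\varphi_n^\bullet=\varphi_m^\bullet\circ\varphi_{mn}^\bullet$ whenever $n\leq m$: on a basis vector the composite sends $(e_I)^\ast\mapsto (a_I)^{m-n}(e_I)^\ast\mapsto ((a_I)^{m-n}/a_I^m)\,e_I=(1/a_I^n)\,e_I=\varphi_n^\bullet((e_I)^\ast)$. By the universal property of the colimit the maps $\varphi_n^\bullet$ then factor through a single morphism of complexes $\varphi_\infty^\bullet\colon \ilim_n K^\bullet(\underline{a}^n)\to C^\bullet(\underline{a})$.

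It remains to see that $\varphi_\infty^i$ is an isomorphism for each $i$. Since filtered colimits commute with the finite direct sum $\bigoplus_{\#I=i}$, we have $\ilim_n K^i(\underline{a}^n)\cong\bigoplus_{\#I=i}\ilim_n A_I^{(n)}$, where $A_I^{(n)}$ is a copy of $A$ and the transition map $A_I^{(n)}\to A_I^{(m)}$ is multiplication by $(a_I)^{m-n}$. The colimit of such a tower is canonically the localization $A_{a_I}$, with the map from the $n$-th term being $x\mapsto x/a_I^n$ — which is exactly $\varphi_n^i$ on the $I$-summand. Hence $\varphi_\infty^i$ is the direct sum over $I$ of these canonical identifications, so it is an isomorphism; the degenerate degree $i=0$ is the case $I=\emptyset$, $a_\emptyset=1$, where the transition maps are identities and both sides equal $A$. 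Putting these together gives the claimed isomorphism of complexes. The only point requiring care is this last degreewise bookkeeping — that filtered colimits pass through the finite direct sums and that the multiplication-by-$a_I$ towers reproduce the localizations with precisely the structure maps making $\varphi_\infty^\bullet$ the standard comparison map — so that is where I would concentrate the details.
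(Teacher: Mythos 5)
Your proposal is correct and follows essentially the same route as the paper: you construct the same compatible family $\varphi_n^\bullet\colon K^\bullet(\underline{a}^n)\to C^\bullet(\underline{a})$, verify $\varphi_n^\bullet=\varphi_m^\bullet\circ\varphi_{mn}^\bullet$, and pass to the induced map on the colimit. The only difference is cosmetic: where the paper checks surjectivity and injectivity of this map by an explicit element argument, you invoke the standard identification of the colimit of the multiplication-by-$a_I$ tower with the localization $A_{a_I}$ (together with commutation of filtered colimits with finite direct sums), which is exactly the same verification packaged as a known fact.
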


\begin{proof}
	We define $\varphi_n^\bullet\colon K^\bullet(\underline{a}^n)\to C^\bullet(\underline{a}^n)=C^\bullet(\underline{a})$ in the same way as above lemma. Then $\varphi_m^\bullet\circ\varphi_{nm}^\bullet=\varphi_n^\bullet$ where $n\leq m$. So we have $\varphi\colon \ilim K^\bullet(\underline{a}^n)\to C^\bullet(\underline{a})$. Each element of $C^i(\underline{a})$ is represented finite sum of $(b_I/a_I^{n_I})e_I$, so it can be displayed as $\sum (1/a_I^n)b_Ie_I$ by taking the maximum of $n$ and replacing $b_I$. Then it is image of $\sum(b_Ie_I)\in K^i(\underline{a}^n)$ , so $\varphi$ is surjective.
	
	Secondly, we show $\varphi$ is injective. Assume $\varphi^i_n(x)=0$ for $x\in K^i(\underline{a}^n)$. If $x=\sum b_Ie_I^\ast$ then $\varphi^i_n(x)=\sum (b_I/a_I^n) e_I^\ast =0$, so $b_I/a_I^n=0$ in $A_{a_I^n}$. Therefore if we take a sufficiently large $l$, $a_I^lb_I=0$. So  $\varphi_{nm}^l(x)=0$ by increasing $l$ if necessary, then $\varphi$ is injective.
\end{proof}

Since the functor of taking the inductive limit is exact, the following Corollary follows.
\begin{cor}
	Let $A$ be a ring, $a_1,\dots,a_r\in A$. For each $A$-module $M$;
	\[\check{H}^i(\underline{a},M)\cong\ilim H^i(\underline{a}^n,M).\]
\end{cor}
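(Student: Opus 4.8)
The plan is to obtain this as a formal consequence of the preceding proposition $\ilim K^\bullet(\underline{a}^n)\cong C^\bullet(\underline{a})$, using only that tensoring commutes with inductive limits and that inductive limits over $\N$ are exact. First I would tensor the isomorphism of the proposition with $M$: since $-\otimes M$ commutes with $\ilim$, there is an isomorphism of complexes
\[\ilim K^\bullet(\underline{a}^n,M)=\ilim\bigl(K^\bullet(\underline{a}^n)\otimes M\bigr)\cong\Bigl(\ilim K^\bullet(\underline{a}^n)\Bigr)\otimes M\cong C^\bullet(\underline{a})\otimes M=C^\bullet(\underline{a},M),\]
the transition maps of the left-hand inductive system being $\varphi_{mn}^\bullet\otimes M$, and the isomorphism being compatible with the differentials.

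Next I would use that the functor $\ilim$ over the directed set $\N$ is exact on $A$-modules, hence commutes with passing to the cohomology of a complex. Applying this to the inductive system $\{K^\bullet(\underline{a}^n,M)\}_n$ yields
\[H^i\bigl(\ilim K^\bullet(\underline{a}^n,M)\bigr)\cong\ilim H^i\bigl(K^\bullet(\underline{a}^n,M)\bigr)=\ilim H^i(\underline{a}^n,M).\]
Combining the two displays,
\[\check{H}^i(\underline{a},M)=H^i\bigl(C^\bullet(\underline{a},M)\bigr)\cong H^i\bigl(\ilim K^\bullet(\underline{a}^n,M)\bigr)\cong\ilim H^i(\underline{a}^n,M),\]
which is the claim. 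If naturality in $M$ is wanted for later use, it follows because every map in the chain above is induced by natural transformations.

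I do not anticipate a real obstacle: both ingredients are standard. If one wanted a self-contained argument for the second step, one would note that exactness of $\ilim$ preserves, for each $n$, the short exact sequences of cycles, boundaries and cohomology $0\to Z^i\to K^i(\underline{a}^n,M)\to B^{i+1}\to 0$ and $0\to B^i\to Z^i\to H^i(\underline{a}^n,M)\to 0$, whence $\ilim$ commutes with $H^i$. The only mild care needed is bookkeeping: checking that the maps in the inductive system are indeed those induced by the $\varphi_{mn}^\bullet$ defined above, so that the right-hand limit is the intended one.
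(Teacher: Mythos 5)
Your proof is correct and follows essentially the same route as the paper, which simply invokes exactness of the inductive limit (together with the fact that tensoring commutes with $\varinjlim$) to deduce the corollary from the preceding proposition. Your write-up just makes explicit the bookkeeping that the paper leaves implicit.
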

\section{Weakly proregular sequences}

\begin{defi}
	Let $\mathscr{A}$ be an Abelian category, $(X_n,\varphi_{mn})$ a projective system in $\mathscr{A}$. $(X_n)$ is said to be \textbf{essentially zero} or \textbf{pro-zero} if for each $n$, there is $m\geq n$ such that $\varphi_{mn}\colon X_m\to X_n$ is zero map.
\end{defi} 

Obviously, if $(X_n)$ is essentially zero then $\plim X_n=0$.

\begin{prop}
	Let $\mathscr{A}$ be an Abelian category. We consider a exact sequence of projective system in $\mathscr{A}$;
	\[\ses[(f_n)][(g_n)]{(X_n)}{(Y_n)}{(Z_n)}.\]
	Then $(Y_n)$ is essentially zero if and only if the other two are essentially zero.
\end{prop}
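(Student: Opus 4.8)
The plan is to prove both implications by a direct diagram chase in $\mathscr{A}$, using nothing beyond the universal properties of kernel and cokernel (alternatively one could pass to a module category via the Freyd--Mitchell embedding and chase elements). To fix notation I would write $\alpha_{mn}\colon X_m\to X_n$, $\beta_{mn}\colon Y_m\to Y_n$, $\gamma_{mn}\colon Z_m\to Z_n$ for the transition maps and record the two facts used repeatedly: exactness of the sequence of projective systems means $0\to X_n\to Y_n\to Z_n\to 0$ is exact for every $n$, so each $f_n$ is monic, each $g_n$ is epic, and $\operatorname{im}f_n=\ker g_n$; and since $(f_n)$, $(g_n)$ are morphisms of projective systems we have commuting squares $\beta_{mn}\circ f_m=f_n\circ\alpha_{mn}$ and $\gamma_{mn}\circ g_m=g_n\circ\beta_{mn}$.

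For the ``only if'' direction I would argue as follows: assume $(Y_n)$ is essentially zero and fix $n$; choose $m\geq n$ with $\beta_{mn}=0$. Then $f_n\circ\alpha_{mn}=\beta_{mn}\circ f_m=0$ forces $\alpha_{mn}=0$ because $f_n$ is monic, and $\gamma_{mn}\circ g_m=g_n\circ\beta_{mn}=0$ forces $\gamma_{mn}=0$ because $g_m$ is epic. Hence the one index $m$ witnesses that both $(X_n)$ and $(Z_n)$ are essentially zero.

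For the ``if'' direction the key observation is that a single transition step of $(Y_n)$ should be assembled from one step that kills the quotient part followed by one step that kills the sub part, so two successive choices are needed and their order matters. Assuming $(X_n)$ and $(Z_n)$ essentially zero, I would fix $n$, first pick $m\geq n$ with $\alpha_{mn}=0$, then (applying essential vanishing of $(Z_n)$ at index $m$) pick $p\geq m$ with $\gamma_{pm}=0$. From $g_m\circ\beta_{pm}=\gamma_{pm}\circ g_p=0$ the map $\beta_{pm}$ factors through $\ker g_m=\operatorname{im}f_m$, and since $f_m$ is an isomorphism onto its image there is $u\colon Y_p\to X_m$ with $\beta_{pm}=f_m\circ u$; then $\beta_{pn}=\beta_{mn}\circ f_m\circ u=f_n\circ\alpha_{mn}\circ u=0$, so $p$ witnesses that $(Y_n)$ is essentially zero.

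The only step that is not purely formal is the factorization $\beta_{pm}=f_m\circ u$: this is where exactness in the middle ($\ker g_m=\operatorname{im}f_m$) and on the left ($f_m$ monic, hence iso onto its image) both enter, and it is the place I would be most careful when writing this up in an arbitrary abelian category rather than a module category. Everything else is bookkeeping with the two commuting squares and the chosen indices, so I do not anticipate a genuine obstacle beyond getting the order of the two choices right.
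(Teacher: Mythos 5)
Your proof is correct and follows essentially the same route as the paper: the forward direction via monicity of $f_n$ and epicity of $g_m$, and the converse by first killing the transition on $(X_n)$ between $m$ and $n$, then on $(Z_n)$ between $p$ and $m$, and factoring $\beta_{pm}$ through $\ker g_m=\operatorname{im}f_m$ to conclude $\beta_{pn}=0$. The factorization step you flag is exactly the "easy diagram chasing" in the paper's proof, and your handling of it is fine.
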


\begin{proof}
	$(\Longrightarrow)$ Trivial. $(\Longleftarrow)$ For each $n$, there is $m\geq n$ such that $X_m\to X_n$ since $(X_n)$ is essentially zero. Similarly there is $l\geq m$ such that $Z_l\to Z_m$ is zero map, then we have the following commutative diagram with the rows are exact;
	\[\begin{tikzcd}
		0\nxcell X_{l}\darrow[]\nxcell[f_{l}] Y_{l}\darrow[\varphi_{lm}]\nxcell[g_{l}]Z_{l}\darrow[0]\nxcell 0\\
		0\nxcell X_m\darrow[0]\nxcell[f_m]Y_m\darrow[\varphi_{mn}]\nxcell[g_m] Z_m\darrow[]\nxcell0\\
		0\nxcell X_n\nxcell[f_n]Y_n\nxcell[g_n]Z_n\nxcell0
	\end{tikzcd}\]
	So we get $\varphi_{ln}=\varphi_{mn}\circ\varphi_{lm}=0$ by easy diagram chasing.
\end{proof}

We use same symbols as before for the Koszul and \v{C}ech complexes. Note that $(K_i(\underline{a}^n))_{n\in\N}$ is a projective system defined by $K_i(\underline{a}^m)\to K_i(\underline{a}^n);e_I\mapsto a_I^{m-n}e_I~(m\geq n)$.

\begin{defi}
	Let $A$ be a ring. $a_1,\dots,a_r\in A$ is called a \textbf{weakly proregular sequence} if for each $1\leq i\leq r$, the projective system $\{H_i(\underline{a}^n)\}$ is essentially zero.
\end{defi} 

The property of being weakly proregular does not depend on the order by the definition.

\begin{prop}\label{prop: w.p.s iff cech is effaceable}
	Let $A$ be a ring, $\underline{a}=a_1,\dots,a_r\in A$. $\underline{a}$ is a weakly proregular sequence if and only if $\check{H}^\bullet(\underline{a},-)$ is an effaceable $\delta$-functor. 
\end{prop}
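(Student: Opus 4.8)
The plan is to reduce both directions to the single equivalence
\[\text{$\underline{a}$ is weakly proregular}\iff \check{H}^i(\underline{a},E)=0\ \text{for every injective $A$-module $E$ and every }i>0,\]
and to control the groups $\check{H}^i(\underline{a},E)$ by combining the isomorphism $\check{H}^i(\underline{a},M)\cong\ilim_n H^i(\underline{a}^n,M)$ of the previous section with $\hom_A(-,E)$-duality. The first step is to record that for an injective $A$-module $E$ there is a natural isomorphism
\[H^i(\underline{a}^n,E)\cong\hom_A(H_i(\underline{a}^n),E)\]
compatible with the transition maps of the inductive system $\{H^i(\underline{a}^n,E)\}_n$ and with those of the projective system $\{H_i(\underline{a}^n)\}_n$. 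Indeed, each $K_j(\underline{a}^n)=\bigwedge^j A^r$ is a finitely generated free module, so $K^\bullet(\underline{a}^n,E)=\hom_A(K_\bullet(\underline{a}^n),A)\otimes_A E\cong\hom_A(K_\bullet(\underline{a}^n),E)$ as cochain complexes; since $E$ is injective, the exact functor $\hom_A(-,E)$ commutes with homology, which gives the displayed isomorphism. Moreover $\varphi_{mn}^\bullet\colon K^\bullet(\underline{a}^n)\to K^\bullet(\underline{a}^m)$ is the $A$-linear dual of the Koszul projective-system map $K_\bullet(\underline{a}^m)\to K_\bullet(\underline{a}^n)$, so after $-\otimes_A E$ and passing to cohomology the transition map becomes $\hom_A(-,E)$ applied to the transition map of $\{H_i(\underline{a}^n)\}$. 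Taking colimits and using $\check{H}^i(\underline{a},E)\cong\ilim_n H^i(\underline{a}^n,E)$, we get $\check{H}^i(\underline{a},E)\cong\ilim_n\hom_A(H_i(\underline{a}^n),E)$ for every injective $E$.

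For the forward implication, assume $\underline{a}$ is weakly proregular and fix $i>0$. If $i>r$ then $\check{H}^i(\underline{a},E)=0$ since $K^\bullet$ is concentrated in degrees $0,\dots,r$; if $1\le i\le r$ then $\{H_i(\underline{a}^n)\}$ is essentially zero, so in the inductive system $\{\hom_A(H_i(\underline{a}^n),E)\}$ every term is annihilated by some later transition map, hence its colimit $\check{H}^i(\underline{a},E)$ vanishes. Given an arbitrary $A$-module $M$, pick a monomorphism $u\colon M\to E$ with $E$ injective; then $\check{H}^i(\underline{a},u)$ lands in $\check{H}^i(\underline{a},E)=0$ and so is the zero map. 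Together with Proposition~\ref{prop:cech is delta}, this shows $\check{H}^\bullet(\underline{a},-)$ is an effaceable $\delta$-functor.

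For the converse, assume $\check{H}^\bullet(\underline{a},-)$ is an effaceable $\delta$-functor. I first claim $\check{H}^i(\underline{a},E)=0$ for every injective $E$ and every $i>0$: choosing a monomorphism $u\colon E\to N$ with $\check{H}^i(\underline{a},u)=0$, injectivity of $E$ makes $u$ a split monomorphism, so $\check{H}^i(\underline{a},u)$ is a split monomorphism which, being zero, forces $\check{H}^i(\underline{a},E)=0$ (alternatively, by Definition--Proposition~\ref{prop:effaceable->universal} the $\delta$-functor is universal, hence agrees with $R^\bullet\check{H}^0(\underline{a},-)$, which vanishes on injectives in positive degrees by Proposition~\ref{prop:prop_of_derived_functor}(4)). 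By the first step, $\ilim_n\hom_A(H_i(\underline{a}^n),E)=0$ for every injective $E$ and every $i>0$. Now fix $1\le i\le r$ and $n_0$, and embed $H_i(\underline{a}^{n_0})$ into an injective $A$-module $E$ via a monomorphism $\iota$. Since the class of $\iota$ in $\ilim_n\hom_A(H_i(\underline{a}^n),E)=0$ is zero, there is $m\ge n_0$ such that the composite of $\iota$ with the transition map $H_i(\underline{a}^m)\to H_i(\underline{a}^{n_0})$ is zero; as $\iota$ is injective, this transition map itself is zero. Hence $\{H_i(\underline{a}^n)\}$ is essentially zero for every $1\le i\le r$, i.e.\ $\underline{a}$ is weakly proregular.

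The routine-looking but genuinely load-bearing point is the compatibility of the isomorphism $H^i(\underline{a}^n,E)\cong\hom_A(H_i(\underline{a}^n),E)$ with the two systems of transition maps, since the whole argument works by transporting the ``essentially zero'' condition across this isomorphism after passing to the (co)limit. The one conceptual step is the observation in the converse that effaceability — which a priori only produces, for each module, \emph{some} monomorphism killed by $\check{H}^i$ — already forces $\check{H}^i(\underline{a},-)$ to vanish on all injectives; that is what lets one feed injective hulls of the Koszul homology modules into the colimit vanishing and extract the needed zero transition maps.
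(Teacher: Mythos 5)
Your proposal is correct and follows essentially the same route as the paper: dualizing the Koszul complex against an injective module to get $\check H^i(\underline{a},E)\cong\ilim_n\hom_A(H_i(\underline{a}^n),E)$, deducing vanishing on injectives from the essentially-zero condition for one direction, and embedding each $H_i(\underline{a}^{n_0})$ into an injective to recover zero transition maps for the other. You merely spell out two points the paper leaves implicit — the compatibility of the duality isomorphism with the transition maps, and why effaceability forces $\check H^i(\underline{a},-)$ to vanish on all injectives — so no further comparison is needed.
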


\begin{proof}
	Assume that $\underline{a}$ is a weakly proregular sequence. Let $I$ be an injective module. Now there is a isomorphism $H^i(\underline{a}^n,I)\cong \hom(H_i(\underline{a}^n),I)$ since $K^\bullet(\underline{a}^n,I)=\hom(K_\bullet(\underline{a}^n),I)$ and $\hom(-,I)$ is an exact functor. For each $n\geq0$, there is $m\geq n$ such that $H_i(\underline{a}^m)\to H_i(\underline{a}^n)$ is zero map since $H_i(\underline{a}^n)$ is essentially zero. So $\check{H}^i(\underline{a},I)=\ilim H^i(\underline{a}^n,I)=0$.
	
	Secondly, assume that $\check{H}^\bullet(\underline{a},-)$ is effaceable $\delta$-functor. For each $n\geq 0$, we have injective module $I$ and injection $\varepsilon\colon H_i(\underline{a}^n)\to I$. Then there is $m\geq n$ such that;
	\[\begin{tikzcd}
		H_i(\underline{a}^m)\nxcell H_i(\underline{a}^n)\nxcell[\varepsilon]I
	\end{tikzcd}\]
	is zero map by $\varepsilon\in H^i(\underline{a}^n,I)$ and $\ilim H^i(\underline{a}^n,I)=0$.
\end{proof}

Then \v{C}ech cohomology is the derived functor of $\check{H}^0(\underline{a},-)$ if $\underline{a}$ is a weakly proregular sequence. So the next question of interest is when is a sequence weakly proregular?

\begin{defi}
	Let $A$ be a ring, $\underline{a}=a_1,\dots,a_r\in A$. $\underline{a}$ is called \textbf{proregular sequence} if for each $1\leq i\leq r$ and $n>0$, there is $m\geq n$ such that $((a_1^m,\dots,a_{i-1}^m):a_i^mA)\subset ((a_1^n,\dots,a_{i-1}^n):a_i^{m-n}A)$.
\end{defi}

Note that a regular sequence is proregular and this phenomenon is characteristic of non-Noetherian rings.

\begin{prop}
	Let $A$ be a Noetherian ring. For each $a_1,\dots,a_r\in A$ is a proregular sequence.
\end{prop}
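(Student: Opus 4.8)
The plan is to obtain proregularity purely from the ascending chain condition, applied to one carefully chosen chain of colon ideals; no heavier machinery (not even Artin--Rees) will be required. Fix an index $i$ with $1\leq i\leq r$ and an integer $n>0$, and abbreviate $J_k\coloneqq(a_1^k,\dots,a_{i-1}^k)$ for $k\geq 1$, with the convention $J_k=0$ when $i=1$; recall that $(J_k:a_i^tA)=\{x\in A\mid xa_i^t\in J_k\}$. The goal is then to produce some $m\geq n$ with $(J_m:a_i^mA)\subseteq(J_n:a_i^{m-n}A)$.

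The first, elementary, step is a monotonicity remark: for every $k\geq n$ and every $j\leq i-1$ one has $a_j^k=a_j^{k-n}a_j^n\in J_n$, so $J_k\subseteq J_n$. The second step invokes Noetherianity: the ascending chain
\[
(J_n:a_iA)\subseteq(J_n:a_i^2A)\subseteq(J_n:a_i^3A)\subseteq\cdots
\]
of ideals of $A$ must stabilize, so there is $s\geq 1$ with $(J_n:a_i^tA)=(J_n:a_i^sA)$ for every $t\geq s$.

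With these in hand I would set $m\coloneqq n+s$ and verify the required inclusion directly. Suppose $x\in A$ with $xa_i^m\in J_m$. Since $m\geq n$, the first step gives $xa_i^m\in J_n$, i.e.\ $x\in(J_n:a_i^mA)$. Because $m=n+s\geq s$ and $m-n=s\geq s$, the second step yields $(J_n:a_i^mA)=(J_n:a_i^sA)=(J_n:a_i^{m-n}A)$, so $xa_i^{m-n}\in J_n$, i.e.\ $x\in(J_n:a_i^{m-n}A)$. Hence $(J_m:a_i^mA)\subseteq(J_n:a_i^{m-n}A)$; as $i$ and $n$ were arbitrary, $\underline{a}$ is proregular.

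I do not anticipate any real obstacle: the substance of the argument is just the ascending chain condition together with the observation that raising generators to higher powers can only shrink the ideal they generate. The only items demanding a little care are the exponent bookkeeping --- which forces the precise choice $m-n=s$ so that both $m$ and $m-n$ lie in the stable range of the chain --- and the degenerate case $i=1$, where $J_k=0$ and the claim reduces to stabilization of the annihilator chain $(0:a_1A)\subseteq(0:a_1^2A)\subseteq\cdots$, handled verbatim by the same reasoning.
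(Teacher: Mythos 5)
Your argument is correct and is essentially the paper's own proof: both apply the ascending chain condition to the chain of colon ideals $\bigl((a_1^n,\dots,a_{i-1}^n):a_i^tA\bigr)_{t\geq 1}$ and combine its stabilization with the inclusion $(a_1^m,\dots,a_{i-1}^m)\subseteq(a_1^n,\dots,a_{i-1}^n)$ for $m\geq n$ to drop the exponent from $a_i^m$ to $a_i^{m-n}$. The only difference is bookkeeping in the choice of witness (you take $m=n+s$, the paper in effect takes the stabilization index itself), which is immaterial.
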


\begin{proof}
	Let $J_m=((a_1^m,\dots,a_{i-1}^m):a_i^mA), I_{n,m}=((a_1^n,\dots,a_{i-1}^n):a_i^{m-n}A)$. Now $\{I_{n,m}\}_{m\geq n}$ is an  ascending chain of ideals, then there is $m_0\geq n$ such that for each $m\geq m_0,  I_{n,m_0}=I_{n,m}$. Let $m=m_0+n$, then for each $a\in J_{m_0}$, $aa_i^{m-n}=aa_i^{m_0}\in(a_1^{m_0},\dots,a_{i-1}^{m_0})\subset(a_1^n,\dots,a_{i-1}^n)$. So $a\in I_{n,m}=I_{n,m_0}$.
\end{proof}

\begin{prop}
	Let $A$ be a ring. A proregular sequence is weakly proregular.
\end{prop}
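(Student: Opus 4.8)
The plan is to induct on the length $r$ of the sequence $\underline{a}$, via the standard factorisation $K_\bullet(\underline{a}^n)\cong K_\bullet(\underline{a}'^n)\otimes K_\bullet(a_r^n)$, where $\underline{a}'=a_1,\dots,a_{r-1}$. The case $r=0$ is vacuous and the case $r=1$ is immediate: for $i=1$ the proregularity condition reads $(0:_A a_1^m)\subseteq(0:_A a_1^{m-n})$ for a suitable $m\geq n$, and since $H_1(a_1^n)=(0:_A a_1^n)$ with transition map $H_1(a_1^m)\to H_1(a_1^n)$ equal to multiplication by $a_1^{m-n}$, this containment says exactly that the transition map vanishes; hence $\{H_1(a_1^n)\}$ is essentially zero.

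For $r\geq 2$, note first that $\underline{a}'$ is again proregular, since its defining conditions are those of $\underline{a}$ with index $i\leq r-1$; so by the induction hypothesis $\{H_p(\underline{a}'^n)\}_n$ is essentially zero for every $p\geq 1$ (it vanishes outright for $p\geq r$). Consider the short exact sequence of complexes $0\to K_\bullet(\underline{a}'^n)\to K_\bullet(\underline{a}^n)\to L^{(n)}_\bullet\to 0$, where $K_\bullet(\underline{a}'^n)$ is the subcomplex of chains not involving $e_r$ and $L^{(n)}_\bullet$ is the quotient, with basis in degree $p$ the classes of the $e_{I'}\wedge e_r$; this sequence is compatible with the transition maps, so passing to homology yields a long exact sequence of projective systems. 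Now $L^{(n)}_\bullet$ is isomorphic to $K_\bullet(\underline{a}'^n)$ with degrees shifted up by one, so $H_p(L^{(n)})\cong H_{p-1}(\underline{a}'^n)$, and the connecting map $H_p(L^{(n)})\to H_{p-1}(\underline{a}'^n)$ is $\pm a_r^n$. For $p\geq 2$ the two modules flanking $H_p(\underline{a}^n)$ in the short exact sequence extracted from this long sequence are a quotient of $H_p(\underline{a}'^n)$ and a submodule of $H_{p-1}(\underline{a}'^n)$, both essentially zero; the proposition on short exact sequences of projective systems then gives that $\{H_p(\underline{a}^n)\}$ is essentially zero.

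The crux is the case $p=1$. Here the long exact sequence produces a short exact sequence of projective systems $0\to Q^{(n)}\to H_1(\underline{a}^n)\to N^{(n)}\to 0$ in which $Q^{(n)}$ is a quotient of $H_1(\underline{a}'^n)$, hence essentially zero, and
\[N^{(n)}=\ker\bigl(H_0(\underline{a}'^n)\xrightarrow{\pm a_r^n}H_0(\underline{a}'^n)\bigr)=\bigl((a_1^n,\dots,a_{r-1}^n):a_r^n A\bigr)\big/(a_1^n,\dots,a_{r-1}^n).\]
The delicate point is that, because the transition maps of $L^{(n)}_\bullet$ are those of $K_\bullet(\underline{a}'^n)$ multiplied by an extra factor $a_r^{m-n}$, the transition map $N^{(m)}\to N^{(n)}$ is $\bar x\mapsto\overline{a_r^{m-n}x}$. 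Now proregularity for $i=r$ yields, for each $n$, an $m\geq n$ with $\bigl((a_1^m,\dots,a_{r-1}^m):a_r^m A\bigr)\subseteq\bigl((a_1^n,\dots,a_{r-1}^n):a_r^{m-n}A\bigr)$, and this says precisely that for $x$ representing a class of $N^{(m)}$ one has $a_r^{m-n}x\in(a_1^n,\dots,a_{r-1}^n)$, i.e.\ the transition map $N^{(m)}\to N^{(n)}$ is zero. So $\{N^{(n)}\}$ is essentially zero, and a final application of the proposition on projective systems to $0\to Q^{(n)}\to H_1(\underline{a}^n)\to N^{(n)}\to 0$ completes the induction.

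The main obstacle is thus not a single deep step but the bookkeeping around the quotient complex $L^{(n)}_\bullet$: one must check that the short exact sequence of Koszul complexes is natural in $n$, identify the shifted homology together with the connecting homomorphism $\pm a_r^n$, and — most importantly — keep track of the extra factor $a_r^{m-n}$ appearing in the transition maps of $\{H_p(L^{(n)})\}$, since it is exactly the essential vanishing of this twisted system in degree $p=1$ that corresponds to the $i=r$ clause of proregularity.
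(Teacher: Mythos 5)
Your proof is correct and follows essentially the same route as the paper: induction on $r$, the short exact sequence of Koszul complexes with the degree-shifted quotient, the resulting short exact sequences of homology systems, and the identification of the degree-one quotient system with the $i=r$ proregularity condition. Your explicit tracking of the extra factor $a_r^{m-n}$ in the transition maps of the quotient system is a point the paper treats only implicitly, but the argument is the same.
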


\begin{proof}
	We use induction on $r$. When $r=1$, let $a\in A$ be proregular. Then for each $n>0$, there is $m\geq n$ such that $\ann a^m\subset\ann a^{m-n}$. So $(H_1(a^n))$ is essentially zero since $H_1(a^n)=\ann a^n$. Now we assume that claim up to $r-1$. The exact sequence of complexes;
	\[\begin{tikzcd}
		0\nxcell{K_\bullet(a_1^n,\dots,a_{r-1}^n)}\nxcell{K_\bullet(a_1^n,\dots,a_r^n)}&\phantom{a}\\
		&\phantom{\mbox{$K_\bullet(a_1^n,\dots,a_r^n)$}}\nxcell K_\bullet(a_1^n,\dots,a_{r-1}^n)(-1)\nxcell0
	\end{tikzcd}\]
	induces the exact sequence of homology;
	\settowidth{\masyulengtha}{$H_1(a_1^n,\dots,a_r^n)$}
	\[\begin{tikzcd}
		&\makebox[\masyulengtha]{$\cdots$}\nxcell H_i(a_1^n,\dots,a_{r-1}^n)\nxcell[(-1)^ia_r^n]\\
		H_i(a_1^n,\dots,a_{r-1}^n)\nxcell H_i(a_1^n,\dots,a_r^n)\nxcell H_{i-1}(a_1^n,\dots,a_{r-1}^n)\nxcell[(-1)^{i-1}a_r^n]\\
		H_{i-1}(a_1^n,\dots,a_{r-1}^n)\nxcell\makebox[\masyulengtha]{$\cdots$}
	\end{tikzcd}\]
	Then we have the following exact sequence;
	\[\begin{tikzcd}
		0\nxcell{H_0(a_r^n,H_i(a_1^n,\dots,a_{r-1}^n))}\nxcell{H_i(a_1^n,\dots,a_r^n)}&\phantom{a}\\
		\phantom{0}\nxcell{H_1(a_r^n,H_{i-1}(a_1^n,\dots,a_{r-1}^n))}\nxcell0
	\end{tikzcd}\]
	and this induces the exact sequence of projective systems. The first projective system is essentially zero by the assumption of induction. Also for each $i>1$, the third system is essentially zero since $H_1(a_r^n,H_{i-1}(a_1^n,\dots,a_{r-1}^n))=\mkset{x\in H_{i-1}(a_1^n,\dots,a_{r-1}^n)}{a_r^nx=0}$. If $i=1$, the system with ;
	\[H_1(a_r,H_0(a_1^n\dots,a_{r-1}^n))=\mkset{x\in H_{0}(a_1^n,\dots,a_{r-1}^n)}{a_r^nx=0}\]
	is essentially zero since $\underline{a}$ is proregular. So this completes the proof by induction.
\end{proof}

\begin{cor}
	Let $A$ be a Noetherian ring. For each $\underline{a}=a_1,\dots,a_r\in A$, $\underline{a}$ is weakly proregular.
\end{cor}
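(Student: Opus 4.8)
The plan is to read this corollary off directly from the two propositions that immediately precede it, so that essentially no new argument is needed. The first of those says that in a Noetherian ring every finite sequence $a_1,\dots,a_r$ is proregular; the mechanism there is the ascending chain condition applied to the family of ideals $((a_1^n,\dots,a_{i-1}^n):a_i^{m-n}A)$ indexed by $m\ge n$, which stabilises and then delivers the inclusion required by the definition of proregularity. The second says that over an arbitrary ring a proregular sequence is weakly proregular; this was obtained by induction on $r$, splitting off the last element $a_r$ and chasing the long exact Koszul homology sequence so that the pro-system $\{H_i(\underline{a}^n)\}$ inherits the pro-zero property. Composing the two implications gives precisely the statement: every finite sequence of elements of a Noetherian ring is weakly proregular. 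Since both ingredients are already in hand, there is no real obstacle; the proof is a one-line deduction.

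For completeness I would also remark on a direct route that avoids introducing proregularity at all, should one want the Noetherian case to stand on its own. Each Koszul homology module $H_i(\underline{a}^n)$ is a finitely generated $A$-module annihilated by the ideal $(a_1^n,\dots,a_r^n)$, hence by a power of $I=(a_1,\dots,a_r)$, while the transition maps $H_i(\underline{a}^m)\to H_i(\underline{a}^n)$ of the pro-system are induced by multiplication by the monomials $a_I^{\,m-n}$. Presenting $H_i(\underline{a}^n)$ as a subquotient of the free module $\bigwedge^i A^r$ and applying the Artin--Rees lemma to the relevant submodule then forces those transition maps to vanish once $m$ is large enough compared with $n$, which is exactly weak proregularity. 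I would present this only in outline; the sole place warranting care is the bookkeeping with the monomials $a_I$, and that is routine.

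Finally, this corollary is what links the present circle of ideas back to the classical situation: combined with the main theorem of the note it recovers the familiar functorial isomorphisms $H^i_I(M)\cong\check{H}^i(\underline{a},M)$ for every module $M$ over a Noetherian ring, as in \cite[Theorem 3.5.6.]{BrunsHerzog}.
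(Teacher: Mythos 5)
Your main argument—composing the two preceding propositions (every sequence in a Noetherian ring is proregular; every proregular sequence is weakly proregular)—is exactly the deduction the paper intends for this corollary, so the proof is correct and matches the paper's approach. The Artin--Rees sketch is an optional aside and not needed; the one-line composition suffices.
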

\section{Local cohomology}

Let $A$ be a ring and $I$ an ideal of $A$. The functor $\Gamma_I$ is defined by;
\[\Gamma_I(M)=\mkset{x\in M}{\text{$I^nx=0$ for some $n \geq 0$}}\]
for an $A$-module $M$. Note that $\Gamma_I(M)=\ilim\hom_A(A/I^n,M)$ and this isomorphism is functorial in $M$. By the definition, $\Gamma_I$ is a left exact functor.

\begin{defi}
	Let $A$ be a ring and $I$ an ideal of $A$. $H^i_I(-)$ denote the derived functor of $\Gamma_I(-)$ and it is called \textbf{local cohomology}.
\end{defi}

Note that $H^i(M)\cong\ilim\Ext^i(A/I^n,M)$. We summarize the relationship between local cohomology and \v{C}ech cohomology. First, we note that the 0-th parts of each cohomology are naturally isomorphic.

\begin{lem}
	Let $A$ be a ring, $\underline{a}=a_1,\dots,a_r\in A$, and $I=(a_1,\dots,a_r)$. For each $A$-module $M$;
	\[\Gamma_I(M)\cong\check{H}^0(\underline{a},M).\]
\end{lem}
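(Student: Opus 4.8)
The plan is to write down the \v{C}ech complex $C^\bullet(\underline{a},M)$ in low degrees and identify $\check{H}^0(\underline{a},M)$ as the kernel of $d^0$, then show this kernel is exactly $\Gamma_I(M)$. Recall that $C^0(\underline{a},M) = M$ and $C^1(\underline{a},M) = \bigoplus_{j=1}^r M_{a_j}$, with $d^0 \colon M \to \bigoplus_j M_{a_j}$ sending $x$ to $(x/1, \dots, x/1)$, i.e. the product of the canonical localization maps. Hence $\check{H}^0(\underline{a},M) = \ker d^0 = \{x \in M : x/1 = 0 \text{ in } M_{a_j} \text{ for all } j\}$.

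Next I would unwind what it means for $x/1 = 0$ in $M_{a_j}$: by definition of localization this says $a_j^{n_j} x = 0$ for some $n_j \geq 0$. So $x \in \ker d^0$ iff for each $j$ there is $n_j$ with $a_j^{n_j}x = 0$. Taking $n = \max_j n_j$, every monomial in the $a_j$ of total degree at least $rn$ annihilates $x$, hence $I^{rn} x = 0$; conversely if $I^N x = 0$ then in particular $a_j^N x = 0$ for each $j$. Therefore $\ker d^0 = \{x \in M : I^N x = 0 \text{ for some } N\} = \Gamma_I(M)$. This gives the desired equality of submodules of $M$, and the identification is manifestly compatible with $A$-module homomorphisms $M \to M'$ since both sides are described by the same annihilation condition, so the isomorphism is functorial in $M$.

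There is essentially no hard step here; the only thing to be careful about is the passage between "$a_j^{n_j}x = 0$ for all $j$" and "$I^N x = 0$ for some $N$", which uses that $I$ is finitely generated by the $a_j$, so that a power of $I$ lands inside the ideal generated by fixed powers $a_1^n, \dots, a_r^n$. One could alternatively cite the functorial description $\Gamma_I(M) = \varinjlim \hom_A(A/I^n, M)$ recorded just above, but the direct computation with the \v{C}ech differential is cleaner and also makes the functoriality transparent.
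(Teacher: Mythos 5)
Your proposal is correct and follows essentially the same route as the paper: identify $\check{H}^0(\underline{a},M)$ with the kernel of $d^0\colon M\to\bigoplus_j M_{a_j}$ and match the resulting annihilation condition with the definition of $\Gamma_I(M)$. You merely spell out the pigeonhole step (a large power of $I$ is generated by monomials in which some $a_j$ appears to a high power) that the paper leaves implicit, which is a fine addition.
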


\begin{proof}
	Here $\check{H}^0(\underline{a},M)$ is the kernel of
	\[M\to\bigoplus_{i=1}^r M_{a_i}e_i;x\mapsto (x/1)e_i .\] 
	Then for each $x\in\check{H}^0(\underline{a},M)$ and $1\leq i\leq r$, there is $n_i\geq 0$ such that $a_i^{n_i}x=0$. So we have $x\in\Gamma_I(M)$. Similarly the converse is true, so they are equal as submodules of $M$.
\end{proof}

With the preparations we have made above, we can prove the results we have been aiming for.

\begin{thm}\label{thm:goal}
	Let $A$ be a ring, $\underline{a}=a_1,\dots,a_r\in A$ and $I=(a_1,\dots,a_r)$. $\underline{a}$ is a weakly proregular sequence if and only if for any $i$ and $A$-module $M$, $H^i_I(M)\cong\check{H}^i(\underline{a},M)$.
\end{thm}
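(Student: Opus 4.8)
The plan is to recognize that, once $\underline{a}$ is weakly proregular, both $H^\bullet_I(-)$ and $\check{H}^\bullet(\underline{a},-)$ are universal $\delta$-functors with (naturally) the same $0$-th term, and then to invoke the uniqueness of universal $\delta$-functors; the converse will follow by evaluating the assumed isomorphism on injective modules.

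First I would treat the implication $(\Longrightarrow)$. Assume $\underline{a}$ is weakly proregular. By Proposition \ref{prop: w.p.s iff cech is effaceable} the $\delta$-functor $\check{H}^\bullet(\underline{a},-)$ of Proposition \ref{prop:cech is delta} is effaceable, so in particular each $\check{H}^i(\underline{a},-)$ with $i>0$ is effaceable, and hence $\check{H}^\bullet(\underline{a},-)$ is universal by Definition-Proposition \ref{prop:effaceable->universal}. On the other side, $H^\bullet_I(-)=R^\bullet\Gamma_I(-)$ is a $\delta$-functor by parts (2) and (3) of Proposition \ref{prop:prop_of_derived_functor}; each $H^i_I(-)$ with $i>0$ is effaceable because it vanishes on injectives by part (4) of that proposition and every module embeds in an injective, so $H^\bullet_I(-)$ is universal as well. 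The Lemma just above, together with the isomorphism $R^0\Gamma_I\cong\Gamma_I$ from part (1) of Proposition \ref{prop:prop_of_derived_functor}, provides a natural isomorphism $\check{H}^0(\underline{a},-)\cong\Gamma_I(-)\cong H^0_I(-)$. Since a universal $\delta$-functor is determined up to unique isomorphism by its $0$-th term, this degree-$0$ isomorphism extends uniquely to an isomorphism of $\delta$-functors, and in particular gives isomorphisms $H^i_I(M)\cong\check{H}^i(\underline{a},M)$ for all $i$ and all $M$ (in fact natural ones, compatible with connecting morphisms).

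Next I would handle $(\Longleftarrow)$. Suppose $H^i_I(M)\cong\check{H}^i(\underline{a},M)$ for all $i$ and all $A$-modules $M$. Fix an injective module $E$. For $i>0$ we have $H^i_I(E)=R^i\Gamma_I(E)=0$ by part (4) of Proposition \ref{prop:prop_of_derived_functor}, hence $\check{H}^i(\underline{a},E)=0$. Now let $M$ be an arbitrary $A$-module and choose an embedding $M\hookrightarrow E$ into an injective; the induced map $\check{H}^i(\underline{a},M)\to\check{H}^i(\underline{a},E)=0$ is then zero, so $\check{H}^i(\underline{a},-)$ is effaceable for every $i>0$, i.e.\ $\check{H}^\bullet(\underline{a},-)$ is an effaceable $\delta$-functor. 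Proposition \ref{prop: w.p.s iff cech is effaceable} now yields that $\underline{a}$ is weakly proregular.

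There is no real obstacle here: the substance lies in the earlier propositions (chiefly Proposition \ref{prop: w.p.s iff cech is effaceable} and Definition-Proposition \ref{prop:effaceable->universal}), and the one point that deserves a line of care is the passage, in the $(\Longrightarrow)$ direction, from a mere natural isomorphism of the $0$-th terms to an isomorphism of $\delta$-functors — this is precisely the content of the universal property, obtained by extending the degree-$0$ isomorphism and its inverse and invoking uniqueness. The $(\Longleftarrow)$ direction is immediate once one observes that the vanishing of $H^i_I$ ($i>0$) on injectives transports across the assumed isomorphism and forces effaceability of $\check{H}^\bullet(\underline{a},-)$.
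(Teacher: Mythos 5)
Your proposal is correct and follows essentially the same route as the paper: the forward direction via universality of both $\delta$-functors (effaceability of $\check{H}^\bullet(\underline{a},-)$ from Proposition \ref{prop: w.p.s iff cech is effaceable} plus Definition-Proposition \ref{prop:effaceable->universal}, and the identification of the degree-$0$ terms), and the converse by transporting the vanishing of $H^i_I$ on injectives to $\check{H}^i(\underline{a},-)$ and applying Proposition \ref{prop: w.p.s iff cech is effaceable} again. You merely spell out more explicitly than the paper does that $H^\bullet_I$ itself is a universal $\delta$-functor, which is a welcome clarification rather than a deviation.
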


\begin{proof}
	Assume that $\underline{a}$ is weakly proregular. $\check{H}^\bullet(\underline{a},-)$ is a $\delta$-functor by \ref{prop:cech is delta}. Moreover  $\check{H}^\bullet(\underline{a},-)$ is universal by \ref{prop: w.p.s iff cech is effaceable} and \ref{prop:effaceable->universal}. So $H^i_I(M)\cong\check{H}^i(\underline{a},M)$ by above lemma. The converse is true by \ref{prop: w.p.s iff cech is effaceable}.
\end{proof}
\subsection*{Acknowledgments}
I thank Hisanori Ohashi, and Yuya Matsumoto 
for helpful comments and discussions.

\begin{bibdiv}
	\begin{biblist}
		\bib{BrunsHerzog}{book}{
			author 		={Bruns, W.},
			author		={Herzog, J.},
			title		={Cohen--Macaulay Rings (Revised ed.)},
			publisher 	={Cambridge Univ. Press},
			year		={1997}
		} 
		\bib{GreenleesMay}{article}{
			title 		={Derived functors of I-adic completion and local homology},
			journal 	={Journal of Algebra},
			volume 		={149},
			number 		={2},
			pages 		={438--453},
			year 		={1992},
			author 		={Greenlees, J. P. C.},
			author		={May, J. P.}
		}
		\bib{Tohoku}{article}{
			author 		={Grothendieck, A.},
			title 		={Sur quelques points d'algèbre 	homologique, I},
			journal 	={Tohoku Math. J.},
			number		={2},
			pages 		={119--221},
			volume 		={9},
			year 		={1957}
		}
		\bib{Grothendieck}{book}{
			title		={Local cohomology, notes by R. Hartshorne},
			author		={Grothendieck, A.},
			year		={1966},
			publisher	={Springer}
		}
		\bib{Hartshorne}{book}{
			author 		={Hartshorne, R.},
			title 		={Algebraic Geometry},
			publisher 	={Springer},
			year 		={1977},
		}
		\bib{Schenzel}{article}{
			author={Schenzel, P.},
			title={Proregular sequences, local cohomology, and completion},
			journal={Math. Scand.},
			volume={92},
			year={2003},
			pages={161--180},
		}
	\end{biblist}
\end{bibdiv}
\end{document}